\numberwithin{equation}{section} 
\renewcommand{\mathbf}[1]{\boldsymbol{#1}}  
\newcommand{\Real}{\mathbb{R}}
\newcommand{\Sp}{\mathrm{Span\,}}
\newtheorem{Theorem}{\textsc{Theorem}}[section]
\newtheorem{Lemma}[Theorem]{\textsc{Lemma}}
\newtheorem{Definition}[Theorem]{\textsc{Definition}}
\newtheorem{Example}[Theorem]{\textsc{Example}}
\begin{document}
\title{\textbf{Polyhedron under Linear Transformations}}
\author{Zhang~Zaikun\footnote{Institute of Computational Mathematics and
                  Scientific/Engineering Computing,~Chinese Academy of Sciences,~Beijing 100190,~CHINA.}}
\date{May 6, 2008}
\maketitle
\begin{abstract}
The image and the inverse image of a polyhedron under a linear transformation are polyhedrons.

\noindent \textbf{Keywords:~}polyhedron,~linear transformation,~Sard quotient theorem.
\end{abstract}

\section{Introduction}
All the linear spaces discussed here are real.
\begin{Definition}~
\\\noindent i.)~Suppose that $X$ is a linear space,~a subset $P$~of $X$~is said to be a polyhedron if it has the form
$$P=\{x\in X;~f_k(x)\leq\lambda_i\},$$
where~$n$~is a positive integer,~$\{f_k\}_{k=1}^{n}\subset X'$,~and
$\{\lambda_k\}_{k=1}^{n}\subset \Real$.\\ If
$\lambda_k=0~(k=1,~2,~3,~...,~n)$,~then $P$~is said to be a
polyhedral cone.\vskip3pt
\noindent ii.)~Suppose that $X$ is a
TVS,~a subset $P$~of $X$~is said to be a closed polyhedron if it has
the form
$$P=\{x\in X;~f_k(x)\leq\lambda_i\},$$
where~$n$~is a positive integer,~$\{f_k\}_{k=1}^{n}\subset X^*$,~and
$\{\lambda_k\}_{k=1}^{n}\subset \Real$.\\ If
$\lambda_k=0~(k=1,~2,~3,~...,~n)$,~then $P$~is said to be a closed
polyhedral cone.
\end{Definition}
\noindent It is obvious that both $\emptyset$ and $X$ itself
are~(closed)~polyhedral cones.

\section{Main Results}
Our main results are as follows.
\begin{Theorem}\label{Linear}
Suppose that $X$ and $Y$ are linear spaces,~and $T: X\rightarrow Y$
is a linear operator. \\i.)~If $A\subset X$ is a
polyhedron~(polyhedral cone)~and $T$ is \textbf{surjective},~then
$T(A)$ is a polyhedron~(polyhedral cone).
\\ii.)~If $B\subset Y$ is a
polyhedron~(polyhedral cone),~then $T^{-1}(B)$ is a
polyhedron~(polyhedral cone).
\end{Theorem}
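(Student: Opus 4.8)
The plan is to dispatch part (ii) at once and to reduce part (i) to a finite-dimensional statement about polyhedra. For (ii), write $B=\{y\in Y:\ g_k(y)\le\mu_k,\ k=1,\dots,m\}$ with $g_k\in Y'$. Each composition $g_k\circ T$ is a linear functional on $X$, and $T^{-1}(B)=\{x\in X:\ (g_k\circ T)(x)\le\mu_k,\ k=1,\dots,m\}$, a polyhedron; if every $\mu_k=0$ the same formula exhibits a polyhedral cone. That is all of (ii), and notice it uses no hypothesis on $T$.

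For (i), write $A=\{x\in X:\ f_k(x)\le\lambda_k,\ k=1,\dots,n\}$ with $f_k\in X'$, and package the data as the linear map $F=(f_1,\dots,f_n)\colon X\to\Real^n$ together with $C=\{z\in\Real^n:\ z_k\le\lambda_k,\ k=1,\dots,n\}$, so that $A=F^{-1}(C)$ and $C$ is a polyhedron in $\Real^n$ --- a polyhedral cone precisely when $A$ is. Since $T$ is \emph{surjective}, it has a linear right inverse $s\colon Y\to X$ with $T\circ s=\mathrm{id}_Y$ (alternatively, factor $T$ through $X/\ker T$). Set $\phi=F\circ s\colon Y\to\Real^n$ and $W=F(\ker T)$, a linear subspace of $\Real^n$. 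Every $x$ with $Tx=y$ equals $s(y)+w$ for some $w\in\ker T$, whence $F(x)=\phi(y)+F(w)$ with $F(w)$ ranging over all of $W$; therefore
$$T(A)=\bigl\{\,y\in Y:\ \phi(y)\in C-W\,\bigr\}=\phi^{-1}(C+W),$$
where $C+W=\{c+w:\ c\in C,\ w\in W\}$ and $-W=W$ because $W$ is a subspace. So $T(A)$ is the inverse image under the linear map $\phi$ of the sum, in the finite-dimensional space $\Real^n$, of the polyhedron $C$ and the subspace $W$.

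What remains is the finite-dimensional lemma: the sum of a polyhedron $C\subseteq\Real^n$ and a subspace $W\subseteq\Real^n$ is again a polyhedron, and a polyhedral cone when $C$ is. I would prove this by Fourier--Motzkin elimination --- writing $W=\Sp\{v_1,\dots,v_r\}$, it suffices by iteration to treat $W=\Real v$ for one vector $v$, and, after a linear change of coordinates carrying $v$ to a coordinate axis, this is the elimination of a single variable from a finite system of linear inequalities, which again yields a finite system. That one elimination step is the only genuinely computational point, and I expect it to be the main obstacle; everything around it is bookkeeping. Granting the lemma, $C+W$ is a polyhedron, hence by part (ii) so is $T(A)=\phi^{-1}(C+W)$; and when $A$ is a polyhedral cone we have $\lambda_k=0$, so $C$ and thus $C+W$ are polyhedral cones and $\phi^{-1}(C+W)$ is a polyhedral cone as well. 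Surjectivity is used exactly to manufacture the section $s$; without it the identity $T(A)=\phi^{-1}(C+W)$ fails, and $T(A)$ can genuinely fail to be a polyhedron (e.g.\ $T\equiv 0$ maps the polyhedron $X$ to $\{0\}$, which is not a polyhedron when $Y$ is infinite-dimensional).
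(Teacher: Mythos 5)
Your proof is correct, and part (ii) coincides with the paper's. For part (i) you take a genuinely different route. The paper factors $T$ through quotients: it first handles $\ker T\subset\bigcap_k\ker f_k$ via the Sard quotient theorem, then does the case $\dim(\ker T)=1$ by an explicit elimination along a kernel vector $\xi$ (introducing $h_{ij}=f_i-\frac{f_i(\xi)}{f_j(\xi)}f_j$), inducts on $\dim(\ker T)$, and finally reduces the general case to finite kernel dimension by passing to $X/M$ with $M=(\bigcap_k\ker f_k)\cap\ker T$. You instead push the whole problem into $\Real^n$ via $F=(f_1,\dots,f_n)$ and a linear right inverse $s$ of $T$, reducing everything to the single finite-dimensional fact that $C+W$ is a polyhedron, which you prove by Fourier--Motzkin elimination. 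The two arguments share their computational core --- your one-variable elimination step is literally the paper's \textsc{Step} 2 transplanted to $\Real^n$, and your appeal to part (ii) at the end plays the role of the paper's \textsc{Step} 1 --- but your packaging is cleaner for the purely algebraic theorem and makes the role of surjectivity transparent. What the paper's heavier organization buys is portability: a bounded linear right inverse $s$ need not exist for a surjection of Fr\'echet spaces, so your construction of $\phi=F\circ s$ would not survive in Theorem 2.2, whereas the Sard-quotient/induction skeleton carries over verbatim (which is why the paper proves only the Fr\'echet version and declares the linear one ``similar''). One small caveat: the existence of the linear section $s$ (equivalently, of a complement of $\ker T$) rests on the axiom of choice via a basis extension; you should say so, or use your parenthetical alternative of factoring through $X/\ker T$, which is exactly the paper's Sard quotient lemma part i.).
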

\begin{Theorem}\label{Frechet}
Suppose that $X$ and $Y$ are Fr$\acute{e}$chet spaces,~and $T:
X\rightarrow Y$ is a bounded linear operator. \\i.)~If $A\subset X$
is a closed polyhedron~(closed polyhedral cone)~and $T$ is
\textbf{surjective},~then $T(A)$ is a closed polyhedron~(closed
polyhedral cone).
\\ii.)~If $B\subset Y$ is a closed
polyhedron~(closed polyhedral cone),~then $T^{-1}(B)$ is a
polyhedron~(closed polyhedral cone).
\end{Theorem}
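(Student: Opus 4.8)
The plan is to dispatch part ii.) immediately and to put all the effort into part i.). For ii.), write $B=\{y\in Y:~g_k(y)\le\mu_k,~k=1,\dots,n\}$ with $g_k\in Y^*$; then $T^{-1}(B)=\{x\in X:~(g_k\circ T)(x)\le\mu_k,~k=1,\dots,n\}$, and since $T$ is bounded each $g_k\circ T$ belongs to $X^*$. Hence $T^{-1}(B)$ is a closed polyhedron, and a closed polyhedral cone when every $\mu_k=0$ (the word ``polyhedron'' in the statement of ii.) should read ``closed polyhedron''). For part i.), Theorem \ref{Linear} already gives that $T(A)$ is a polyhedron in the purely algebraic sense, so the whole point is to show that the defining functionals may be chosen continuous, i.e.\ in $Y^*$.

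First I would invoke the open mapping theorem: since $X$ and $Y$ are Fr\'echet and $T$ is surjective, $T$ is open, the kernel $N=\ker T$ is closed, the induced map $\bar T:X/N\to Y$ is a topological isomorphism, and consequently every $\ell\in X^*$ vanishing on $N$ descends to a unique $\tilde\ell\in Y^*$ with $\tilde\ell\circ T=\ell$. It therefore suffices to realize $T(A)$ as the solution set of finitely many inequalities $h(x)\le c$ with $h\in X^*$ and $h|_N=0$, and then push each such $h$ down to $Y^*$.

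Write $A=\{x\in X:~f_k(x)\le\lambda_k,~k=1,\dots,n\}$ with $f_k\in X^*$ and split the indices into $I=\{k:~f_k|_N=0\}$ and $J=\{k:~f_k|_N\ne0\}$. Fix $y\in Y$ and any $x_0$ with $Tx_0=y$. Then $y\in T(A)$ iff $f_k(x_0)\le\lambda_k$ for $k\in I$ (values depending only on $y$) and there is some $z\in N$ with $f_k(z)\le\lambda_k-f_k(x_0)$ for all $k\in J$ simultaneously. Let $L=\{(f_k(z))_{k\in J}:~z\in N\}$, a linear subspace of the finite-dimensional space $\Real^{J}$; the last condition holds exactly when $c(x_0):=(\lambda_k-f_k(x_0))_{k\in J}$ lies in the cone $L+\Real_+^{J}$. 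This cone is finitely generated, hence by the Minkowski--Weyl theorem equal to $\{c\in\Real^{J}:~\langle a_m,c\rangle\le0,~m=1,\dots,M\}$ for finitely many $a_m$ which may be taken in the polar cone $L^{\perp}\cap(-\Real_+^{J})$, so that $a_m\perp L$ and $a_m\le0$ componentwise. Putting $h_m=\sum_{k\in J}(-a_{m,k})f_k\in X^*$ and $\nu_m=\sum_{k\in J}(-a_{m,k})\lambda_k$, the inequality $\langle a_m,c(x_0)\rangle\le0$ becomes $h_m(x_0)\le\nu_m$; since $a_m\perp L$, $h_m$ annihilates $N$, and each $f_k$ with $k\in I$ also annihilates $N$. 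Descending through $\bar T$ produces $\tilde f_k,\tilde h_m\in Y^*$ with
$$T(A)=\{y\in Y:~\tilde f_k(y)\le\lambda_k~(k\in I),~\tilde h_m(y)\le\nu_m~(m=1,\dots,M)\},$$
independence of this description from the choice of $x_0$ being a consequence of $a_m\perp L$ and $f_k|_N=0$. If $A$ is moreover a closed polyhedral cone, then every $\lambda_k=0$, whence every $\nu_m=0$, and $T(A)$ is a closed polyhedral cone.

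The main obstacle I anticipate is precisely this continuity bookkeeping: the algebraic description of $T(A)$ is essentially free from Theorem \ref{Linear}, but one must ensure the functionals produced by the finite-dimensional Minkowski--Weyl (Farkas-type) argument annihilate $\ker T$, so that the open mapping theorem can transport them to $Y^*$. The orthogonality relation $a_m\perp L$ is the linchpin that makes this possible; everything else reduces to routine linear algebra together with a careful choice of the preimage $x_0$.
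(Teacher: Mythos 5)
Your proof is correct, but it takes a genuinely different route from the paper's. The two arguments share the same functional-analytic backbone: part ii.) is the identical pullback (and you are right that ``polyhedron'' there is a typo for ``closed polyhedron''), and in part i.) both reduce the problem to exhibiting $T(A)$ as the solution set of inequalities $h(x)\le c$ with $h\in X^*$ annihilating $\ker T$, which then descend to $Y^*$ because $\bar T: X/\ker T\to Y$ is a topological isomorphism --- the paper packages this descent as the ``Sard quotient theorem'', you invoke the open mapping theorem directly, and these are the same device. Where you diverge is in how those functionals are produced. The paper eliminates the kernel one dimension at a time: an explicit Fourier--Motzkin step for $\dim(\ker T)=1$ (the combinations $h_{ij}=f_i-\frac{f_i(\xi)}{f_j(\xi)}f_j$ together with the shift $x+t\xi$ along the kernel), then an induction on finite $\dim(\ker T)$ by factoring $T=\pi\hat T$ through $Y\times\Real$, and finally a quotient by $M=(\bigcap_k\ker f_k)\cap\ker T$ to force the induced kernel to have dimension at most $n$. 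You eliminate the whole kernel in one shot: the fibre condition ``there exists $z\in\ker T$ with $f_k(z)\le\lambda_k-f_k(x_0)$ for $k\in J$'' becomes membership of $c(x_0)$ in the finitely generated cone $L+\Real_+^{J}$, and Minkowski--Weyl turns that into finitely many inequalities $\langle a_m,c(x_0)\rangle\le 0$ with $a_m\in L^{\perp}$, which is precisely the annihilation property needed for descent; the bookkeeping (independence of $x_0$, the cone case $\lambda_k=0\Rightarrow\nu_m=0$, the degenerate cases $J=\emptyset$ or no inequalities at all) checks out. Your version is shorter and conceptually cleaner, at the price of outsourcing the combinatorial core to Minkowski--Weyl, whose standard proof is itself the Fourier--Motzkin induction that the paper carries out by hand; the paper's argument is self-contained at the cost of length.
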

\noindent The conclusions above will be verified in section
\ref{Proofs of Main Results}.

\section{A Lemma}
The following conclusion is significant in our proof.
\begin{Lemma}[\textnormal{Sard Quotient Theorem}]~
\\i.)~Suppose that $X$,~$Y$ and $Z$ are linear spaces,~and
$S:X\rightarrow Y$,~$T:X\rightarrow Z$ are linear operators with
$S$~surjective.~If $\ker S\subset\ker T$,~then there exists a
uniquely specified linear operator $R:Y\rightarrow Z$,~such that
$T=RS$.
\\ii.)~Suppose that $X$,~$Y$ and $Z$ are TVS',~and
$S:X\rightarrow Y$,~$T:X\rightarrow Z$ are bounded linear operators
with $S$~surjective.~If $X$ and $Y$ are $Fr\acute{e}chet$ spaces and
$\ker S\subset\ker T$,~then there exists a uniquely specified
bounded linear operator $R:Y\rightarrow Z$,~such that $T=RS$.
\end{Lemma}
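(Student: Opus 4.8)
The plan is to treat the two parts separately: the first is purely algebraic, and the second merely adds a topological upgrade on top of it. For part i.), I would build $R$ by hand. Since $S$ is surjective, every $y\in Y$ has nonempty preimage $S^{-1}(y)$; pick any $x$ in it and set $R(y):=T(x)$. The first thing to check is well-definedness: if $S(x_1)=S(x_2)=y$, then $x_1-x_2\in\ker S\subseteq\ker T$, so $T(x_1)=T(x_2)$, and $R(y)$ does not depend on the chosen preimage. Linearity of $R$ follows from that of $S$ and $T$: given $y_1,y_2$ and scalars $\alpha,\beta$, choose preimages $x_1,x_2$ and note $\alpha x_1+\beta x_2$ is a preimage of $\alpha y_1+\beta y_2$. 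By construction $RS=T$, since each $x\in X$ is a preimage of $S(x)$, whence $RS(x)=T(x)$. Uniqueness is immediate from surjectivity: if $R_1S=R_2S$, then for any $y$ write $y=S(x)$ to get $R_1(y)=R_1S(x)=R_2S(x)=R_2(y)$.

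For part ii.), the operator $R$ from part i.) already exists and is linear, so the only new content is its continuity. The key tool is the Open Mapping Theorem: since $X$ and $Y$ are Fr\'echet spaces and $S:X\rightarrow Y$ is a continuous surjective linear map, $S$ is open. Take any open neighborhood $U$ of $0$ in $Z$. Because $T$ is continuous, $T^{-1}(U)$ is open in $X$. I claim $R^{-1}(U)=S(T^{-1}(U))$: if $x\in T^{-1}(U)$ then $R(S(x))=T(x)\in U$, so $S(x)\in R^{-1}(U)$; conversely, if $y\in R^{-1}(U)$, pick $x$ with $S(x)=y$, then $T(x)=R(y)\in U$, so $x\in T^{-1}(U)$ and $y\in S(T^{-1}(U))$. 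Since $S$ is open, $S(T^{-1}(U))=R^{-1}(U)$ is open, and as $U$ was arbitrary, $R$ is continuous. Here I use the identity $T=RS$, equivalently $(RS)^{-1}(U)=T^{-1}(U)$, to pin down the relevant preimages.

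The only genuine obstacle is part ii.), and it amounts to invoking the right theorem rather than to any calculation: without a completeness/metrizability hypothesis $R$ need not be continuous, so the Fr\'echet assumption on $X$ and $Y$ is used precisely to make $S$ open. One could alternatively deduce continuity of $R$ from the Closed Graph Theorem, but that would require $Z$ to be suitably complete as well, whereas the open-mapping route uses only what is assumed. Everything else — well-definedness, linearity, the factorization identity, uniqueness — is routine diagram chasing.
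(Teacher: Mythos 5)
Your proof is correct, and it takes a mildly but genuinely different route from the paper's. The paper passes to the quotient space $X/\ker S$: it defines induced maps $\tilde S:[x]\mapsto Sx$ and $\tilde T:[x]\mapsto Tx$, observes that $\tilde S$ is a bounded bijection between the Fr\'echet spaces $X/\ker S$ and $Y$, invokes the inverse mapping theorem to get $\tilde S^{-1}$ bounded, and sets $R=\tilde T\tilde S^{-1}$. You instead build $R$ directly by choosing preimages and then prove continuity by applying the open mapping theorem to $S$ itself, via the identity $R^{-1}(U)=S(T^{-1}(U))$. The two arguments hinge on the same completeness-based fact (open mapping theorem and inverse mapping theorem being equivalent), but your version is somewhat more self-contained: the paper's route silently uses that $\ker S$ is closed and that $X/\ker S$ is again a Fr\'echet space, neither of which it verifies, whereas you need only that $S$ is an open map. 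The paper's route is notationally slicker and makes well-definedness automatic; yours makes the role of each hypothesis more visible, and your closing remark about why the closed graph theorem would impose an extra completeness assumption on $Z$ is a correct and worthwhile observation. One terminological caution: in a general TVS ``bounded'' and ``continuous'' are not synonymous, so strictly you should note that boundedness of $T$ on the metrizable space $X$ gives continuity, and continuity of $R$ on the metrizable space $Y$ gives back boundedness; this is routine but worth a sentence.
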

\begin{proof}
  We will prove only ii.).
  \\ Define
  \begin{align*}
  \tilde S:& X/\ker S\rightarrow Y\\
           & [x]\mapsto Sx,
  \end{align*}
  and
 \begin{align*}
  \tilde T:& X/\ker S\rightarrow Z\\
           & [x]\mapsto Tx.
  \end{align*}
  Then both $\tilde S$ and $\tilde T$ are well defined~(note that $\ker S\subset\ker
  T$)~and bounded.~Besides,~$\tilde S$~is bijective and $\tilde
  S^{-1}$~is bounded,~since $X/\ker S$~and $Y$ are both
  Fr$\acute{\textnormal{e}}$chet
  spaces.~Now define
  $$R=\tilde T\tilde S^{-1},$$then it is easy to show that $R$
  satisfies the requirements.
  \\\noindent The uniqueness of $R$ is trivial.
\end{proof}
\section{Proofs of Main Results}\label{Proofs of Main
Results} We will prove only theorem \ref{Frechet},~because the proof
of theorem \ref{Linear} is similar.~Only the \emph{polyhedron} case
will be discussed.
\begin{proof}[Proof of Theorem \ref{Frechet}]~
   \\i.)~Suppose that $$A=\bigcap_{k=1}^{n}\{x\in X;~f_k(x)\leq\lambda_k\},$$
   where~$n$~is a positive integer,~$\{f_k\}_{k=1}^{n}\subset X^*$,~and
$\{\lambda_k\}_{k=1}^{n}\subset \Real$.~The proof will be presented
in four steps.
  \\\underline{\textsc{Step} 1}.~We will prove that the conclusion
  holds if $$\ker T\subset \bigcap_{k=1}^{n}\ker f_k.$$
  In this case,~for any $k\in\{1,~2,~3,~...,~n\}$,~we can choose a functional~$g_k\in
  Y^*$ such that~$f_k=g_kT$~(Sard quotient theorem).~It can be shown
  without difficulty that $$T(A)=\bigcap_{k=1}^{n}\{y\in Y;~g_k(y)\leq \lambda_k\}.$$
  \\\underline{\textsc{Step} 2}.~We will prove that the conclusion
  holds if $\dim(\ker T)=1$.~This is the most critical part of the
  proof.\\Suppose that $\xi$~is a point in~$\ker T\setminus\{0\}$.~Let
  \begin{align*}
    K_+&=\{k;~1\leq k\leq n~\textnormal{and}~f_k(\xi)> 0\},\\
    K_-&=\{k;~1\leq k\leq n~\textnormal{and}~f_k(\xi)< 0\},\\
    K_0&=\{k;~1\leq k\leq n~\textnormal{and}~f_k(\xi)= 0\}.
  \end{align*}
 For any $i\in K_+$~and~$j\in K_-$,~define
  $$h_{ij}=f_i-\frac{f_i(\xi)}{f_j(\xi)}f_j.$$
Then define
\begin{align*}
A_1=&\bigcap_{^{i\in K_+,}_{j\in K_-}}\{x\in X;~h_{ij}(x)\leq
  \lambda_i-\frac{f_i(\xi)}{f_j(\xi)}\lambda_j\},\\
A_2=&\bigcap_{k\in K_0}\{x\in X;~f_k(x)\leq \lambda_k\}.
\end{align*}
If $K_+=\emptyset$~or~$K_-=\emptyset$,~we take $A_1$~as
$X$.~Similarly,~if $K_0=\emptyset$,~we take $A_2$~as $X$.~We will
prove that $T(A)=T(A_1\cap A_2)$.~It
  suffices to show
  that $T(A_1\cap A_2)\subset T(A)$.
\begin{itemize}
  \item If $K_+=\emptyset=K_-$,~nothing
  needs considering.
  \item If~$K_+\neq\emptyset=K_-$,~fix a point $x\in A_1\cap A_2$,~define
  $$s=\min_{i\in{K_+}}\frac{\lambda_i-f_j(x)}{f_i(\xi)},$$then it is
  easy to show that $x+s\xi\in A$~and~$T(x+s\xi)=Tx$.~The case
  with~$K_-\neq\emptyset=K_+$~is similar.
  \item If $K_+\neq\emptyset\neq K_-$,~fix a point $x\in A_1\cap
  A_2$,~define
$$t=\max_{~j\in
  K_-}\frac{\lambda_j-f_j(x)}{f_j(\xi)}$$ and consider $x+t\xi$.~It is obvious that
  $$T(x+t\xi)=y$$and that $$f_j(x+t\xi)\leq\lambda_j,~\forall j\in
  K_-\cup K_0.$$Suppose
  $$t=\frac{\lambda_{j_0}-f_{j_0}(x)}{f_{j_0}(\xi)}~~(j_0\in
  K_-),$$
  then for any $i\in K_+$,
  \begin{align*}
    &f_i(x+t\xi)\\
    =&h_{ij_0}(x+t\xi)+\frac{f_i(\xi)}{f_{j_0}(\xi)}f_{j_0}(x+t\xi)\\
    \leq&\lambda_i-\frac{f_i(\xi)}{f_{j_0}(\xi)}\lambda_{j_0}+\frac{f_i(\xi)}{f_{j_0}(\xi)}\lambda_{j_0}\\
    =&\lambda_i.
  \end{align*}
  Thus $x+t\xi\in A$.
\end{itemize}
\noindent It has been shown that $T(A_1\cap A_2)\subset T(A)$,~and
consequently~$T(A_1\cap A_2)=T(A)$.~According to
\underline{\textsc{Step} 1},~the conclusion holds under the
assumption $\dim(\ker
  T)=1$.
  \vskip5pt\noindent \underline{\textsc{Step} 3}.~We will prove by induction that the conclusion
  holds if $\dim(\ker T)$~is finite.
  \\If $\dim(\ker T)=0$,~then $T$~is an isomorphism as well as a homeomorphism~(inverse mapping theorem),~thus nothing needs proving.~Now suppose that the conclusion
  holds when $\dim(\ker T)\leq n$~($n\geq 0$).~To prove the case with $\dim(\ker
  T)=n+1$,~choose a point $\eta$~in $\ker T\setminus\{0\}$,~find a functional $F\in X^*$~such
  that $F(\eta)=1$~(Hahn-Banach theorem),~and define
  \begin{align*}
    \hat T:&X\rightarrow Y\times \Real\\
      &x\mapsto (Tx,F(x)),\\
    \pi:&Y\times \Real\rightarrow Y\\
    &(y,\;\lambda)\mapsto y.
  \end{align*}
  Then we have
  \begin{itemize}
    \item $T=\pi\hat T$;
    \item $\dim(\ker\hat T)=n$;
    \item $\dim(\ker\pi)=1$;
    \item both $\hat T$ and $\pi$
  are surjective bounded linear operators.
  \end{itemize}
Thus by the induction hypothesis and the conclusion
of~\underline{\textsc{Step} 2},~$T(A)$~is a closed polyhedron.
  \vskip5pt\noindent\underline{\textsc{Step} 4}.~Now consider the general case.
  \\Let
   $$M=(\bigcap_{k=1}^{n}\ker f_k)\bigcap(\ker T),$$
   then $M$ is a closed linear subspace of $M$,~and therefore $X/M$~is a Fr$\acute{\textnormal{e}}$chet space.~Define
 \begin{align*}
      \tilde T:&X/M\rightarrow Y\\
      &[x]\mapsto Tx,\\
         \tilde f_k:&X/M\rightarrow \Real\\
      &[x]\mapsto f_k(x)
  \end{align*}
  where $k=1,~2,~3,~...,~n$.~Then $\tilde T$ and $\tilde f_k$ are well defined,~$T$~is a bounded linear operator from $X/M$~onto
  $Y$,~and $\{\tilde f_k\}_{k=1}^{n}\subset(X/M)^*$.~Besides,~we
  have$$(\bigcap_{k=1}^{n}\ker \tilde f_k)\bigcap(\ker \tilde
  T)=\{0\},$$which implies that$$\dim(\ker\tilde T)\leq n.$$Now let
   $$\tilde A=\bigcap_{k=1}^{n}\{[x]\in X/M;~\tilde f_k([x])\leq
   \lambda_k\},$$then$$T(A)=\tilde T(\tilde A).$$ From what has been proved,~it
   is easy to show that $T(A)$ is a closed polyhedron.~
   \\\indent Proof of part
   i.)~has been completed.
 \vskip6pt\noindent ii.)~This part is much easier.~Suppose that $$B=\bigcap_{k=1}^{m}\{y\in Y;~g_k(y)\leq\mu_k\},$$
   where~$m$~is a positive integer,~$\{g_k\}_{k=1}^{m}\subset Y^*$,~and
$\{\mu_k\}_{k=1}^{m}\subset \Real$.~One can show
 without difficulty that
 $$T^{-1}(B)=\bigcap_{k=1}^{m}\{x\in X;~g_k(Tx)\leq\mu_k\},$$which is a
 closed polyhedron in $X$.
\end{proof}

\section{Remarks}
For part i)~of theorem \ref{Frechet},~the completeness conditions
are essential.~This can be seen from the following examples.
\begin{Example}
  Suppose that $(Y,\;\|\cdot\|_Y)$ is an infinite dimensional
  Banach space,~and $f$ is an unbounded
  linear functional on it\footnote{For a locally bounded TVS $Y$,~there exist unbounded linear functionals on $Y$ provided $\dim Y=\infty$.~One of them can be constructed as follows:~Let~$U$ be a bounded neighborhood of 0,~and $\{e_k;~k\geq 1\}\subset U$~be a sequence of linearly independent elements in $Y$.~Let $M=\Sp\{e_k;~k\geq 1\}$,~and define $g:M\rightarrow \Real$,~$\sum\alpha_k e_k\mapsto\sum k\alpha_k$.~Then extend $g$ to
  $Y$.}.~Let $X$ has the same elements and linear structure as
  $Y$,~but the norm on $X$ is defined by
  $$\|x\|_X=\|x\|_Y+|f(x)|.$$It is clear that the identify mapping $I:X\rightarrow Y$ is linear,~bounded and bijective.~Now consider $\ker
  f$.~It
  is a closed polyhedral cone in $X$,~while its image under $I$ is
  not closed in $Y$.
\end{Example}
\begin{Example}
  Suppose that $X$ is $\ell^1$.~Let $Y$ has the same elements and linear structure as
  $X$,~but the norm on $Y$ is defined by $$\|(x_k)\|=\sup_{k\geq
  1}|x_k|.$$Then $f:(x_k)\mapsto\sum x_k$ is a bounded linear
  functional on $X$,~while it is unbounded on $Y$.~Now consider the identify mapping again.
\end{Example}
\noindent The preceding examples also imply that inverse mapping
theorem and Sard quotient theorem do not hold without completeness
conditions.
\end{document}